\DeclareMathOperator{\Diff}{Diff}
\DeclareMathOperator{\Hom}{Hom}
\DeclareMathOperator{\Ima}{Im}
\DeclareMathOperator{\Ker}{Ker}
\DeclareMathOperator{\Aut}{Aut}
\DeclareMathOperator{\GL}{GL}
\DeclareMathOperator{\Sh}{Sh}
\DeclareMathOperator{\ad}{ad}
\DeclareMathOperator{\Der}{Der}
\DeclareMathOperator{\QDer}{QDer}
\DeclareMathOperator{\IAut}{IAut}
\DeclareMathOperator{\QAut}{QIAut}
\DeclareMathOperator{\Prim}{Prim}
\DeclareMathOperator{\Met}{Met}
\DeclareMathOperator{\sgn}{sgn}
\def\id{\mathrm{id}}
\def\S{\mathcal{S}}
\def\I{\mathcal{I}}
\def\M{\mathcal{M}}
\def\m{\mathfrak{m}}
\def\f{\mathfrak{f}}
\def\r{\mathbb{R}}
\def\z{\mathbb{Z}}
\def\pt{\partial}
\def \HD#1#2#3#4{
\xymatrix{{#1}\  \ar@<0.5ex>[r]^{{#2}} & \ {#4}
\ar@<0.5ex>[l]^{{#3}}}}
\theoremstyle{definition}
  \newtheorem{thm}{Theorem}[section]
  \newtheorem{lem}[thm]{Lemma}
\theoremstyle{definition}
  \newtheorem{defi}[thm]{Definition}
  \newtheorem{rem}[thm]{Remark}
\title[Homotopy theory of $C_\infty$-algebras and characteristic classes]{Homotopy theory of $C_\infty$-algebras and characteristic classes of fiber bundles}
\author{Hiroshige Kajiura}
\address{Faculty of Sciences, Chiba university, 
1-33 Yayoi-cho, Inage-ku, Chiba 263-8522, Japan}
\email{kajiura@math.s.chiba-u.ac.jp}
\author{Takahiro Matsuyuki}
\address{Department of Mathematics, 
Tokyo Institute of Technology, 
2-12-1 Oh-okayama, Meguro-ku, Tokyo 152-8551, Japan.}
\email{matsuyuki.t.aa@m.titech.ac.jp}
\author{Yuji Terashima}
\address{Department of Mathematics, Tohoku University, 
6-3, Aoba, Aramaki-aza, Aoba-ku, Sendai-shi, Miyagi 980-8578, Japan.}
\email{yujiterashima@tohoku.ac.jp}
\date{}
\begin{document}
\maketitle
\begin{abstract}
In this paper we give a Chern-Weil-type construction of characteristic classes of fiber bundles, based on homotopy theory of C-infinity algebras. Our idea is to replace a family of closed manifolds to a family of C-infinity morphisms with family of metrics.
\end{abstract}
\section{Introduction}

In this paper we give a new Chern-Weil type construction of characteristic classes for fiber bundles by using homotopy theory of $C_\infty$-algebras. For vector bundles, there are several constructions of characteristic classes. In particular, Chern-Weil theory 
is a beautiful theory to get characteristic classes with additional objects as connections or metrics. For fiber bundles which are not principal bundles with compact Lie groups, it is difficult to get Chern-Weil-type theory because 
fiber bundles have diffeomorphism groups as structure groups. Our idea is to replace a family of closed manifolds as fibers to a family of $C_\infty$-morphisms with family of metrics on the fibers. Then, we get get Lie algebra valued differential forms on the base manifold and characteristic classes from Maurer-Cartan forms on moduli spaces of $C_\infty$-morphisms.

In order to carry out this program, we need the basics of
the homotopy theory on $C_\infty$-algebras. The homotopy theory on 
$C_\infty$-algebras is a modification of homotopy theory on 
$A_\infty$-algebras which was developed for instance in 
\cite{CL,fukaya:lagrangian,GuMu,hk:thesis,KaTe,KoSo:note,
Le-Ha,lyuba:Ainftycat,smirnov}. 
Remark that \cite{fukaya:lagrangian} treats filtered $A_\infty$ algebras which 
are important for an application to Floer homology and mirror symmetry. 

A strong tool is the decomposition theorem which states that 
any $C_\infty$-algebra is $C_\infty$-isomorphic to the direct sum of a minimal $C_\infty$-algebra and a linear contractible $C_\infty$-algebra. 
The decomposition theorem was first mentioned in \cite{kont:defquant-pub} 
for $L_\infty$-algebras. 
A proof was given in \cite{KaTe} and was presented in \cite{hk:thesis}. See \cite{CL} for a filtered version. Our construction is based on facts which are obtained from the decomposition theorem. An important fact that any $C_\infty$-quasi-isomorphism has its homotopy inverse. This was first proved in \cite{fukaya:lagrangian} for $A_\infty$-algebras 
by a different method. See \cite{HMS, kadeiD, Pr} for related results about homotopy inverse with another version of homotopy. 
 
Our construction of characteristic classes can be seen as a higher homotopy group version of a construction in 
\cite{MT} which use Chen expansions on fundamental groups.   
For punctured-surface bundles whose fundamental groups are free, 
Chen expansions were used to get Morita-Mumford-Miller classes in \cite{Kaw1, Kaw2} before \cite{MT}. 
Our construction can actually give non-trivial characteristic classes 
also for cases where fibers are not surfaces. 
We give an explicit construction of  such an example where the fiber is $S^2 \times S^3$.

It would be interesting to compare our construction with  
rational homotopy theory on fiber bundles in a wonderful paper \cite{SchSta}. 

This paper is organized as follows. In Section 2, 
we discuss the basics of the homotopy theory on $C_\infty$-algebras
and obtain key facts for our construction of 
characteristic classes from the decomposition theorem.  
In Section 3, we introduce automorphism groups and derivation Lie algebras 
associated to minimal $C_\infty$-algebras. 
Using these tools, in Section 4 
we construct characteristic classes of fiber bundles in the following two cases. 
In Subsection 4.1 we construct them 
for fiber bundles which are homologically trivial. 
This case corresponds to the restriction to the Torelli groups 
when the fiber is a surface. 
In Subsection 4.2, we do the same for a fiber bundle 
whose the fiber is a formal manifold under an additional technical assumption.
In Subsection 4.3, we show that there is a commutative diagram 
to relate these constructions with 
the construction using the fundamental group 
in \cite{MT}. 
In Section 5, we give a non-trivial example of our characteristic classes for a fiber bundle with simply-connected fiber. Specifically, we consider the mapping torus of a special diffeomorphism of $S^2\times S^3$, and calculate our characteristic classes for this bundle.

\noindent
{\bf Acknowledgment.} 
We would like to thank M.~Akaho,  C-H.~Cho, 
K.~Fukaya, T.~Gocho, T.~Kadeishvili, 
A.~Kato,  B.~Keller, T.~Kohno, Y.~Kosmann-Schwarzbach, H.~Ohta and J.~Stasheff for valuable discussions. This work is partially supported by Japan Society for the Promotion of Science (JSPS), Grant-in-Aid for JSPS Research Fellow Grant Number 17J01757, Grants-in-Aid for Scientic Research Grant Number 17K05243, 25400081
and JST CREST Grant Number JPMJCR14D6, Japan.

\section{Homotopy theory of $C_\infty$-algebras}\label{sec:homotopy}

A $C_\infty$-algebra is an $A_\infty$-algebra \cite{jds:hahI,jds:hahII} 
which satisfies a homotopy extension of the graded commutativity condition. 
See for instance \cite{kadei:Cinfty-rh} on a history of $C_\infty$-algebras. 
Various properties of $A_\infty$-algebras then hold true also for $C_\infty$-algebras. 
In this section, 
after briefly recalling the notion of $C_\infty$-algebras and $C_\infty$-morphisms, 
we show that homotopical properties of $A_\infty$-algebras discussed in \cite{KaTe} 
hold in a parallel way for $C_\infty$-algebras. 

\subsection{$\z$-graded vector space}
Throughout Section \ref{sec:homotopy}, all vector spaces are those over a fixed base field $K$ 
of characteristic zero. 
\begin{defi}
Let $V$ be a $\z$-graded vector space. We denote $V^i$ the subspace of elements of $V$ of \textbf{cohomological degree} $i$ and $V_i=V^{-i}$ the subspace of elements of \textbf{homological degree} $i$. Remark that the \textbf{linear dual} $V^*=\Hom(V,\r)$ of $V$ is graded by $(V^*)^i=\Hom(V_i,\r)$. The \textbf{$p$-fold suspension} $V[p]$ of $V$ for an integer $p$ is defined by 
\[V[p]^i:=V^{i+p}.\]
\end{defi}

\begin{defi}\label{Kos}
For distinct elements $v_1,\dots,v_k\in V$ and a permutation $\pi\in\mathfrak{S}_k$, the sign $\epsilon$ defined by the equation on the $\z$-graded $k$-th symmetric power $S^kV$
\[v_1\cdots v_k=\epsilon\cdot v_{\pi(1)}\cdots v_{\pi(k)}\]
is called the \textbf{Koszul sign} of $(v_1,\dots,v_k)\mapsto (v_{\pi(1)},\dots,v_{\pi(k)})$. Similarly the sign $\bar{\epsilon}$ defined by the same equation in the $\z$-graded $k$-th exterior power $\Lambda^kV$ is called the \textbf{anti-Koszul sign}. Note that we have $\bar{\epsilon}=\sgn \pi \cdot \epsilon$.
\end{defi}

\subsection{$C_\infty$-algebras}
We begin by recalling the notions of an $A_\infty$-algebra and an $A_\infty$-morphism. 

\begin{defi}[$C_\infty$-algebra]
Let $A$ be a $\z$-graded vector space and $m=\{m_n:A^{\otimes n}\to A\}_{n\geq1}$ be a family of linear maps with $\deg m_n=2-n$. The pair $(A,m)$ satisfying the following conditions is called a \textbf{$C_\infty$-algebra}. 
\begin{itemize}
\item (\textbf{$A_\infty$-relations}) 
\[\sum_{k+l=n+1}\sum_{j=0}^{k-1}(-1)^{(i+1)(l+1)}m_k\circ (\id_A^{\otimes j}\otimes m_l\otimes \id_A^{\otimes (n-j-l)})=0\]
for $n\geq1$.
\item (\textbf{commutativity}) 
\[\sum_{\sigma\in \Sh(k,n-k)}\bar{\epsilon}\cdot m_n(x_{\sigma(1)},\cdots,x_{\sigma(n)})=0\]
for $n>k>0$ and homogeneous elements $x_1,\dots,x_{n}\in A$. Here $\bar{\epsilon}$ is the anti-Koszul sign for the $(k,n-k)$-shuffle $\sigma$ and elements $x_1,\dots,x_n$.
\end{itemize}
Then $m$ is called a $C_\infty$-structure on $H$.
\end{defi}

The multilinear map $m_k$ has degree $(2-k)$ indicates 
the degree of $m_k(a_1,\dots,a_k)$ is $|a_1|+\cdots +|a_k|+(2-k)$. 
The $A_\infty$-relations imply $(m_1)^2=0$ for $n=1$, 
the Leibniz rule of the differential $m_1$ with respect to the product $m_2$ 
for $n=2$, and the associativity of $m_2$ up to homotopy for $n=3$. 
These facts further imply that 
the cohomology $H(A,m_1)$ has the structure of a 
(non-unital) algebra, where 
the product is induced from $m_2$. 
The commutativity in addition implies $m_2$ is graded commutative, 
and hence $H(A,m_1)$ is graded commutative algebra.

Note that 
the product $m_2$ is strictly associative in $A$ if $m_3$=0. 
\begin{defi}
Let $(A,m)$ be a $C_\infty$-algebra.
\begin{itemize}
\item If higher products are all zero, i.e. $m_3=m_4=\cdots =0$, $(A,m)$ is called a \textbf{differential graded commutative algebra (DGcA)}. 
\item If $m_1=0$, $(A,m)$ is called \textbf{minimal}.  
\end{itemize}
\end{defi}
\begin{rem}[Bar construction of a $C_\infty$-algebra]\label{bar}
Let $(A,m)$ be an $C_\infty$-algebra and $s:A\to A[1]$ be the suspension map. Defining the suspension of $m_n$ by $\bar{m}_n:=s\circ m_n\circ (s^{-1})^{\otimes n}$ for all $n\geq 1$, then the degree of $\bar{m}_n$ is $1$ and the $A_\infty$-relations are rewritten as the simpler equations
\[\sum_{k+l=n+1}\sum_{j=0}^{k-1}\bar{m}_k\circ (\id_{A[1]}^{\otimes j}\otimes \bar{m}_l\otimes \id_{A[1]}^{\otimes (n-j-l)})=0\]
(Getzler-Jones \cite{gj:cyclic}). We denote the bialgebra $BA$ associated to $A$ by
\[BA:=\bigoplus_{n=0}^\infty A[1]^{\otimes n},\]
whose product $\mu:BA\otimes BA\to BA$ and coproduct $\Delta:BA\to BA\otimes BA$ are defined by 
\[\mu(a_1\otimes\cdots\otimes a_k,a_{k+1}\otimes\cdots \otimes a_n):=\sum_{\sigma\in \Sh(k,n-k)}\epsilon\cdot a_{\sigma(1)}\otimes\cdots\otimes a_{\sigma(n)},\]
\[\Delta(a_1\otimes \cdots \otimes a_n):=\sum_{k=0}^n(a_1\otimes\cdots\otimes a_k)\otimes(a_{k+1}\otimes\cdots \otimes a_n)\]
for $a_1,\dots,a_n\in A[1]$. Here $\epsilon$ is the Koszul sign for the $(k,n-k)$-shuffle $\sigma$ and elements $a_1,\dots,a_n$. Then $\bar{m}_n:A[1]^{\otimes n}\to A[1]$ extends to the unique bialgebra derivation $\mathfrak{m}_n:BA\to BA$ by the co-Leibniz rule $\Delta\circ \m_n=(\m_n\otimes \id+\id\otimes\m_n)\circ\Delta$. Setting
\[\m=\sum_{n=1}^\infty\m_n,\]
then $\m$ is a degree $1$ codifferential on $BA$, i.e. $\m^2=0$, from the $A_\infty$-relations of $m$. Furthermore, the commutativity of $m_n$ is equivalent to the Leibniz rule of $\m_n$, i.e., $\m_n\circ \mu=\mu\circ (\m_n\otimes \id+\id\otimes\m_n)$. Thus a $C_\infty$-algebra $(A,m)$ is equivalent to a differential graded bialgebra $(BA,\m)$, which is called the \textbf{bar construction} of $(A,m)$.
\end{rem}

\begin{defi}[$C_\infty$-morphism]
Let $(A,m)$ and $(A',m')$ be $C_\infty$-algebras. A family $f=\{f_n:A^{\otimes n}\to A'\}$ of linear maps with $|f_n|=1-n$ satisfying the following equations is called a \textbf{$C_\infty$-morphism} $f:(A,m)\to (A',m')$.
\begin{itemize}
\item (\textbf{defining equations for an $A_\infty$-morphism})
\[\sum_{\substack{l\geq 1,\\k_1+\cdots+k_l=n}}(-1)^{\sum_{j=1}^lk_j(l-j)+\sum_{\nu<\mu}k_\nu k_\mu}m_l'\circ (f_{k_1}\otimes \cdots\otimes f_{k_l})\]\[=\sum_{\substack{i+1+j=k,\\ i+l+j=n}}(-1)^{1+n+(i+1)(l+1)}f_k\circ (\id_A^{\otimes i}\otimes m_l\otimes \id_A^{\otimes j})\]
for $n\geq1$.
\item (\textbf{commutativity})
\[\sum_{\sigma\in \Sh(k,n-k)}\bar{\epsilon}\cdot f_n(x_{\sigma(1)},\cdots,x_{\sigma(n)})=0\]
for $n>k>0$ and homogeneous elements $x_1,\dots,x_{n}\in A$. Here $\bar{\epsilon}$ is the anti-Koszul sign for the $(k,n-k)$-shuffle $\sigma$ and elements $x_1,\dots,x_n$.
\end{itemize}
If in particular  $f_2=f_3=\cdots=0$, then $f$ is called a \textbf{linear} $C_\infty$-morphism. 
\end{defi}

The defining equation for an $A_\infty$-morphism for $n=1$ implies that 
$f_1:A\to A'$ forms a chain map $f_1:(A,m_1)\to (A',m'_1)$. 
This together with 
the defining equation for $n=2$ implies that 
$f_1:A\to A'$ induces a (non-unital) algebra map 
from $H(A,m_1)$ to $H(A',m_1')$. We denote it by 
$H(f_1):H(A,m_1)\to H(A',m_1')$.

\begin{defi}
A $C_\infty$-morphism $f:(A,m)\to (A',m')$ is 
called a \textbf{$C_\infty$-quasi-isomorphism}  
if $f_1:(A,m_1)\to (A',m_1')$ 
induces an isomorphism between the cohomologies of these two complexes. 
If in particular $f_1$ is itself an isomorphism, then 
$f$ is called a \textbf{$C_\infty$-isomorphism}. 
\end{defi}

\begin{rem}[Bar construction of a $C_\infty$-morphism]
Let $f:(A,m)\to (A',m')$ be a $C_\infty$-morphism. Defining the suspension of $f_n$ by $\bar{f}_n:=s\circ f_n\circ (s^{-1})^{\otimes n}:A[1]^{\otimes n}\to A'[1]$ for all $n\geq 1$, then the degree of $\bar{f}_n$ is $0$ and the relations for $C_\infty$-morphism are rewritten as the equations
\[\sum_{\substack{i\geq 1,\\k_1+\cdots+k_i=n}}\bar{m}_i'\circ (\bar{f}_{k_1}\otimes \cdots\otimes \bar{f}_{k_i})=
\sum_{\substack{i+1+j=k,\\ i+l+j=n}}\bar{f}_k\circ (\id_{A[1]}^{\otimes i}\otimes \bar{m}_l\otimes \id_{A[1]}^{\otimes j}).\]
We can consider the map $\mathfrak{f}:BA\to BA'$ of bialgebras
\[\mathfrak{f}=\sum_{n=1}^\infty\sum_{\substack{i\geq 1,\\k_1+\cdots+k_i=n}}\bar{f}_{k_1}\otimes \cdots\otimes \bar{f}_{k_i}.\]
Note that the commutativity of $f_n$ is equivalent to the compatibility of $\mathfrak{f}$ with the product, i.e., $\mu\circ \mathfrak{f}=\mathfrak{f}\circ \mu$. Then $\mathfrak{f}$ is a map $(BA,\m)\to (BA',\m')$ between bar constructions, i.e. $\mathfrak{f}\circ\m=\m'\circ\mathfrak{f}$ from the first condition of $C_\infty$-morphism. 
\end{rem}
The composition of $C_\infty$-morphisms is defined by the composition of bar constructions of $C_\infty$-morphisms. From the definition, any $C_\infty$-isomorphism has its inverse 
$C_\infty$-isomorphism uniquely. 

On the other hand, 
it is easy to see that 
the composition of $C_\infty$-quasi-isomorphisms 
is a $C_\infty$-quasi-isomorphism. 
A $C_\infty$-quasi-isomorphism has 
its inverse $C_\infty$-quasi-isomorphism 
in a strict sense if and only if it is a $C_\infty$-isomorphism, 
but always has its homotopy inverse as in Theorem \ref{thm:C-infty-homotopy}. 
These facts imply that 
$C_\infty$-quasi-isomorphisms define an equivalence 
relation between $C_\infty$-algebras.

\subsection{Decomposition theorem of $C_\infty$-algebras}
\label{ssec:mmthm}
A pair of minimal $A_\infty$-algebra $(H,m^H)$ and an $A_\infty$-quasi-isomorphism $f:(H,m^H)\to (A,m)$ is called a \textbf{minimal model} of $(A,m)$. 

The decomposition theorem is a strong tool in homotopy algebras. 
It in particular induces the minimal model theorem by Kadeishvili \cite{kadei:minimal}. 
The decomposition theorem is first mentioned in  \cite{kont:defquant-pub} for $L_\infty$-algebras. 
For $A_\infty$-algebras, a proof is given in \cite{KaTe, hk:thesis}. 
See \cite{CL} for a filtered $A_\infty$ version. 
In this subsection, we explain that it holds true also for $C_\infty$-algebras; 
we see that the proof in \cite{KaTe} works without any modification.

As in the case of $A_\infty$-algebras, 
a pair of minimal $C_\infty$-algebra $(H,m^H)$ and an $C_\infty$-quasi-isomorphism $f:(H,m^H)\to (A,m)$ is called a \textbf{minimal model} of $(A,m)$.

\begin{thm}\label{thm:decomp}
Any $C_\infty$-algebra $(A,m)$ is $C_\infty$-isomorphic to 
the direct sum of a minimal $C_\infty$-algebra $M$ and 
a linear contractible $C_\infty$-algebra $C$. Here, a \textbf{linear contractible $C_\infty$-algebra} $C=(C,m^C)$ is a $C_\infty$-algebra such that $m_2^C=m_3^C =\cdots =0$ and the cohomology $H(C,m_1^C)$ is trivial. 
\end{thm}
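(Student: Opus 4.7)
The plan is to perform a Hodge-type splitting of the underlying cochain complex of $m_1$ and then iteratively remove, arity by arity, the parts of the $A_\infty$-structure that fail to respect the splitting. Since $K$ is a field, fix a decomposition $A=\mathcal{H}\oplus B\oplus L$ with $B:=\Ima(m_1)$, $\mathcal{H}$ a complement of $B$ in $\Ker(m_1)$, and $L$ a complement of $\Ker(m_1)$. Put $C:=B\oplus L$; since $m_1|_L:L\to B$ is an isomorphism, $(C,m_1|_C)$ is acyclic, so setting $m^C_1:=m_1|_C$ and $m^C_k:=0$ for $k\ge 2$ makes $(C,m^C)$ a linear contractible $A_\infty$-algebra. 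The goal is then to find an $A_\infty$-isomorphism $(A,m)\cong(\mathcal{H},m^M)\oplus(C,m^C)$ for some minimal $A_\infty$-structure $m^M$ on $\mathcal{H}$.

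Call a multilinear map $\varphi:A^{\otimes n}\to A$ \emph{good} if $\varphi$ vanishes whenever some argument lies in $C$ and if $\varphi(\mathcal{H}^{\otimes n})\subset\mathcal{H}$, and decompose $\varphi=\varphi^{\mathrm{good}}+\varphi^{\mathrm{bad}}$ accordingly. Two basic observations: (a) the composition of good maps is good, because the outer map already annihilates any $C$-input while $\mathcal{H}$-inputs produce $\mathcal{H}$-outputs; (b) the ``bad'' subcomplex of $(\Hom(A^{\otimes n},A),[\bar m_1,\cdot])$ is acyclic, since it splits as a sum of pieces each of which tensors in the acyclic factor $(C,m_1|_C)$. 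I show by induction on $N$ that there is an $A_\infty$-isomorphism $(A,m)\to(A,m^{(N)})$ with $m^{(N)}_k$ good for every $2\le k\le N$. For the inductive step, use an $A_\infty$-isomorphism $F^{(N)}:(A,m^{(N-1)})\to(A,m^{(N)})$ with $\bar f^{(N)}_1=\id$ and $\bar f^{(N)}_k=0$ for $2\le k\le N-1$, leaving $m^{(N)}_k=m^{(N-1)}_k$ for $k<N$; a direct computation in the bar construction shows that the arity-$N$ $A_\infty$-morphism equation reduces to
\[
\bar m^{(N)}_N-\bar m^{(N-1)}_N=-[\bar m_1,\bar f^{(N)}_N].
\]
The arity-$N$ $A_\infty$-relation for $m^{(N-1)}$ reads
\[
[\bar m_1,\bar m^{(N-1)}_N]=-\sum_{\substack{k+l=N+1\\k,l\ge 2}}\bar m^{(N-1)}_k\circ(\id^{\otimes\bullet}\otimes\bar m^{(N-1)}_l\otimes\id^{\otimes\bullet}),
\]
and since every $k,l$ in this range lies in $[2,N-1]$, the inductive hypothesis combined with (a) makes the right-hand side good. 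As $[\bar m_1,\cdot]$ preserves the good/bad splitting, this forces $[\bar m_1,(\bar m^{(N-1)}_N)^{\mathrm{bad}}]=0$. By (b), choose $\bar f^{(N)}_N$ in the bad subcomplex with $[\bar m_1,\bar f^{(N)}_N]=(\bar m^{(N-1)}_N)^{\mathrm{bad}}$; then $\bar m^{(N)}_N=(\bar m^{(N-1)}_N)^{\mathrm{good}}$ is good.

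Finally, composing $F^{(2)}, F^{(3)},\dots$ yields the required isomorphism: at each fixed arity $K$ only the finitely many $F^{(N)}$ with $N\le K$ contribute nontrivially (a nontrivial instance of $\bar f^{(N)}_{N}$ in a composition tree requires arity at least $N$), so the infinite composition is a well-defined coalgebra automorphism of $T^c(A[1])$, and conjugation by it brings $\mathfrak{m}$ to the codifferential of $(\mathcal{H},m^M)\oplus(C,m^C)$. The main obstacle is item (a) together with the combinatorial manipulation of the $A_\infty$-relation: correctly identifying the bad part of the quadratic sum and carrying the signs through the suspended bar-construction formulas. Conceptually, the argument hinges on the elementary but essential fact that the $[\bar m_1,\cdot]$-cohomology of $\Hom(A^{\otimes N},A)$ is concentrated in $\Hom(\mathcal{H}^{\otimes N},\mathcal{H})$, i.e.\ on the good summand, which is exactly what allows every $\bar m_N$ to be straightened by a coboundary.
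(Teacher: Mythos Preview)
Your argument is correct and follows the same overall architecture as the paper's proof: split $A=\mathcal{H}\oplus C$ via a Hodge decomposition, then inductively conjugate by a coalgebra automorphism whose only nontrivial higher component sits in arity $N$, so as to force $m_N^{(N)}$ to live on $\mathcal{H}$ (what you call ``good'' is exactly the paper's condition $m_k=P\,m_k\,(P^{\otimes k})$). The difference lies in how the correcting term $\bar f^{(N)}_N$ is produced. You argue abstractly: the bad part of $\bar m_N^{(N-1)}$ is a $[\bar m_1,\cdot]$-cocycle in an acyclic subcomplex of $\Hom(A[1]^{\otimes N},A[1])$, hence a coboundary. The paper instead writes down an explicit primitive, namely
\[
\bar f^{(n+1)}_{n+1}=\bar h\,\bar m^{(n)}_{n+1}-\bar P\,\bar m^{(n)}_{n+1}\,\mathfrak{h},
\]
using the chain homotopy $h$ from the Hodge decomposition and its extension $\mathfrak{h}$ to $T^c(A[1])$. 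Your route is a clean obstruction-theoretic version and avoids constructing $\mathfrak{h}$; the paper's explicit formula, on the other hand, ties the decomposition to the Kontsevich--Soibelman canonical minimal model and recycles $\mathfrak{h}$ in the subsequent proof that $P$ is $A_\infty$-homotopic to the identity (Theorem~\ref{A-infty-homotopy}). One small point worth making explicit in your write-up: once $\bar f^{(N)}_N$ is chosen, you should say that $m^{(N)}$ is \emph{defined} as the conjugate $\mathfrak f^{(N)}\m^{(N-1)}(\mathfrak f^{(N)})^{-1}$ (rather than leaving the higher $m_k^{(N)}$ implicit), so that $F^{(N)}$ is automatically an $A_\infty$-isomorphism.
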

Especially, 
the composition $M\to M\oplus C\overset{\sim}{\to} A$ gives a minimal model of $(A,m)$, 
where $M\to M\oplus C$ is the linear $C_\infty$-quasi-morphism obtained from the inclusion. 
\begin{proof}
Following \cite{KaTe, hk:thesis}, 
we first choose a Hodge decomposition $(H, \iota,\pi,h)$ of the complex $(A, m_1)$, 
that is, $H:=H(A,m_1)$ is the cohomology, $\iota: H\to A$ and $\pi:A\to H$ are linear map of degree zero 
such that $\pi\circ\iota=id_H$, 
$h:A\to A$ is a linear map of degree minus one and they satisfy 
\begin{equation*}
 m_1 h + h m_1 + P = \id_{A},\qquad h^2=0
\end{equation*}
where $P:=\iota\circ\pi$. This gives a Hodge decomposition of $(BA,\m_1)$, as a complex of vector spaces, 
such that the cohomology is $BH$.  
Actually, $\iota$ and $\pi$ extend to the (linear) coalgebra maps 
$\iota: BH\to BA$ and $\pi: BA\to BH$ 
and one can construct 
a chain homotopy $\mathfrak{h}: BA\to BA$ 
from $\bar{h}, \bar{P}$ and the identity map 
on $A[1]$. 
We in particular choose $\mathfrak{h}$ so that it commutes with the shuffle product $\mu$ 
in the sense that  
$\mathfrak{h}\mu=\mu(\mathfrak{h}\otimes id+ id\otimes \mathfrak{h})$. 
One such chain homotopy $\mathfrak{h}$ shall be constructed 
in equation (\ref{frak-h}).

We put $M:=\Ima P=\Ima \iota$ and $C:=\Ima(m_1h+hm_1)$. Let us consider a coalgebra homomorphism 
$\f^{(2)}:BA\to BA$ defined by $\bar{f}^{(2)}_1=id_{A[1]}$, 
\begin{equation*}
 \bar{f}^{(2)}_2:= \bar{h}\bar{m}_2 - \bar{P}\bar{m}_2\mathfrak{h}, 
\end{equation*}
and $\bar{f}^{(2)}_3=\cdots =0$. 
This defines a $C_\infty$-isomorphism 
$f^{(2)}:(A,m)\to (A,m^{(2)})$, where 
$\m^{(2)}:=\f^{(2)}\circ\m\circ(\f^{(2)})^{-1}$. 
Actually, the commutativity condition for $f^{(2)}$ follows from that 
$\bar{m}_2$ satisfies the commutativity and that $\mathfrak{h}$ commutes with $\mu$. 
In particular, it turns out that 
$m_2^{(2)}=P m_2(P\otimes P)$. 
Thus, $m_2^{(2)}$ defines a bilinear map on $M$. 
Inductively, assume now that 
$(A,m^{(n)})$ is a $C_\infty$-algebra such that 
$m_2^{(n)},\dots,m_n^{(n)}$ define multilinear maps on $M$. 
We set a coalgebra homomorphism 
$\f^{(n+1)}:BA\to BA$ by $\bar{f}_1^{(n+1)}=id_{A[1]}$, 
$\bar{f}_2^{(n+1)}=\bar{f}_3^{(n+1)}=\cdots=\bar{f}_n^{(n+1)}=0$, 
\[
 \bar{f}_{n+1}^{(n+1)}:= \bar{h} \bar{m}^{(n)}_{n+1} - \bar{P}\bar{m}^{(n)}_{n+1}\mathfrak{h}, 
\]
and $\bar{f}_{n+2}^{(n+1)}=\bar{f}_{n+3}^{(n+2)}=\cdots=0$. 
Then, one sees that 
$m_k^{(n+1)}=m_k^{(n)}$ for $k\le n$ and 
$m_{n+1}^{(n+1)}=Pm_{n+1}^{(n)}(P\otimes\cdots\otimes P)$. 
Thus, the induction is completed. 
\end{proof}

The decomposition theorem implies the minimal model theorem as follows. 
Given an $C_\infty$-algebra $(A,m)$ and 
a Hodge decomposition $(H\simeq M,\iota,\pi,h)$ of $(A,m_1)$, 
by the decomposition theorem 
we have an $C_\infty$-algebra structure on $M\oplus C$ and an $C_\infty$-isomorphism $A\simeq M\oplus C$. 
In this situation, the pair $(\iota,\pi)$ extends to 
the pair of linear $C_\infty$-quasi-isomorphisms 
\begin{equation*}
\xymatrix{
 M \ar@<0.5ex>[r]^{\iota\ \ } & M\oplus C \ar@<0.5ex>[l]^{\pi\ \ }   .
}
\end{equation*}
Thus, the composition of $\iota$ with the $C_\infty$-isomorphism 
gives a minimal model  $M\to (A,m)$ of $(A,m)$. What is stronger, 
an $A_\infty$-quasi-isomorphism $(A,m)\to M$ is also obtained here. 

Given a minimal model $(H,m^H)\to (A,m)$, the composition 
$(H,m^H)\to (A,m)\to M$ of the $C_\infty$-quasi-isomorphisms 
is a $C_\infty$-isomorphism since $H\simeq M$. 
Thus, given a $C_\infty$-algebra $(A,m)$ , its minimal models are 
unique (only) up to $C_\infty$-isomorphisms. 
On the other hand, when we choose 
a Hodge decomposition $(H=M,\iota,\pi,h)$ of $(A,m_1)$, 
there exists a canonical construction of a minimal model of $(A,m)$ 
as presented in \cite{KoSo}. 
We employ this fact, too, 
later to construct characteristic classes of fiber bundles.

\subsection{$C_\infty$-homotopy}
In this subsection, we show the existence of 
a homotopy inverse for a $C_\infty$-quasi-isomorphism (Theorem \ref{thm:C-infty-homotopy}). 
We follow \cite{KaTe} where it is shown for the $A_\infty$ case, and rewrite it to the $C_\infty$ case. 
Note that, for the $A_\infty$ case, 
this theorem was first proved in \cite{fukaya:lagrangian} by a different method.  
See \cite{HMS, kadeiD, Pr} for related results about homotopy inverse with another version of homotopy.

For simplicity, suppose $K=\r$ in this subsection.
\begin{defi}
Let $(C,\mu,\Delta)$, $(C',\mu,\Delta')$ be bialgebras, and $f:C\to C'$ be a map of bialgebras. A linear map $D:C\to C'$ satisfying
\[ D\mu=\mu'(f\otimes D+D\otimes f),\quad \Delta' D=(f\otimes D+D\otimes f)\Delta\]
is a \textbf{bialgebra derivation over $f$}. Especially, if $C=C'$ and $f=\id$, $D$ is called a bialgebra derivation on $C$. Then, for a bialgebra derivation $D$ on $C'$, $fD$ is a bialgebra derivation over $f$. If $f$ is an isomorphism of bialgebras, all derivations over $f$ are obtained in such a way. Similarly for a bialgebra derivation $D$ on $C$, $Df$ is a bialgebra derivation over $f$ and the parallel fact holds.
\end{defi}

\begin{defi}[$C_\infty$-homotopy]\label{homotopy}
Two $C_\infty$-morphisms $f,g:(A,m)\to (A',m')$ are \textbf{$C_\infty$-homotopic} if there exist families of $C_\infty$-morphisms $f(t):(A,m)\to (A',m')$ and bialgebra derivations $\mathfrak{h}(t):T(A[1])\to T(A'[1])$ over $\mathfrak{f}(t)$ parametrized piecewise smoothly by $t\in [0,1]$ such that $f(0)=f$, $f(1)=g$ and
\[\frac{d\mathfrak{f}}{dt}(t)=\mathfrak{m}'\circ \mathfrak{h}(t)+\mathfrak{h}(t)\circ \mathfrak{m}.\]
Then we denote $f\sim g$, and $\{(f(t),\mathfrak{h}(t))\}_{t\in [0,1]}$ is called a \textbf{$C_\infty$-homotopy from $f$ to $g$}.
\end{defi}
\begin{thm}\label{thm:C-infty-homotopy}
Let $(A,m)$ and $(A',m')$ be $C_\infty$-algebras. A $C_\infty$-morphism $f:(A,m)\to (A',m')$ is a $C_\infty$-quasi-isomorphism if and only if $f$ is a \textbf{$C_\infty$-homotopy equivalence}, i.e. there exists a $C_\infty$-morphism $g:(A',m')\to (A,m)$ such that $g\circ f\sim \id_A$ and $f\circ g\sim \id_{A'}$.
\end{thm}
\begin{proof}
Given a Hodge decomposition $(H=M,\iota,\pi,h)$ of $(A,m_1)$, by Theorem \ref{thm:decomp} 
we have an $C_\infty$-isomorphism $A\simeq M\oplus C$, and 
the pair $(\iota,\pi)$ extends to the pair of linear $C_\infty$-quasi-isomorphisms 
\begin{equation*}
\xymatrix{
 M \ar@<0.5ex>[r]^{\iota\ \ } & M\oplus C \ar@<0.5ex>[l]^{\pi\ \ }   .
}
\end{equation*}
Here we show that 
the projection $P=\iota\circ\pi$ also extends to the 
linear $C_\infty$-(quasi-iso)morphism $P:M\oplus C\to M\oplus C$ to $M$ and it turns out to be 
$C_\infty$-homotopic to the identity $C_\infty$-(iso)morphism $id_{M\oplus C}$. In fact, setting $P_t:=(1-t)\bar{P}+t\, \id_{A[1]}:A[1]\to A[1]$,  by $\bar{m}_1P_t=P_t\bar{m}_1$ we have
\begin{equation*}
 \begin{split}
\frac{d}{dt}P_t^{\otimes}& =
P_t^{\otimes}\otimes 
(\id_{A[1]}-\bar{P})\otimes P_t^{\otimes} \\ 
 & =
P_t^{\otimes}\otimes
(\bar{m}_1\bar{h}+\bar{h}\bar{m}_1)
\otimes P_t^{\otimes} \\
 & =[\m,P_t^{\otimes}\otimes\bar{h}\otimes P_t^{\otimes}] , 
\end{split}
\end{equation*}
where we express as $P_t^{\otimes}$ the coalgebra map corresponding to $P_t$. By integrating this equation over $[0,1]$, we see that 
$id_{M\oplus C}$ and $P$ is $C_\infty$-homotopic to each other, 
where the map 
\begin{equation}
\mathfrak{h}:=\int_0^1  (P_t^{\otimes}\otimes\bar{h}\otimes P_t^{\otimes})dt:BA\to BA
 \label{frak-h}
\end{equation}
defines a chain homotopy from $\id_{BA}$ to $P^{\otimes}$ 
giving the Hodge decomposition of $(BA,\m_1)$. 
Here, the integration is defined by $\int_0^1 t^k = [t^{k+1}/(k+1)]_0^1=1/(k+1)$. 
One can also confirm that $\mathfrak{h}$ actually forms a bialgebra derivation.

We also choose a Hodge decomposition $(H'=M',\iota',\pi',h')$ of $(A',m_1')$. 
Then we have the following diagram 
of $C_\infty$-algebras and $C_\infty$-quasi-isomorphisms 
\begin{equation*}
\xymatrix{A\ar[r]^{\sim\quad\ }\ar[d]_f&M\oplus C\ar@<0.5ex>[r]^{\pi\ \ \ }& \ar@<0.5ex>[l]^{\iota\ \ \ } M=H(A,m_1)\ar[d]^{f_H}\\
A'\ar[r]^{\sim\quad\ }&M'\oplus C'\ar@<0.5ex>[r]^{\pi'\ \ \ }& 
 \ar@<0.5ex>[l]^{\iota'\ \ \ } M'=H(A',m_1').}
\end{equation*}
and here we define $f_H$ so that the diagram commutes. 
Since any composition of $C_\infty$-quasi-isomorphisms is 
a $C_\infty$-quasi-isomorphism, so is $f_H$.  
Furthermore, since $M$ and $M'$ are minimal $C_\infty$-algebras, 
$f_H$  is actually a $C_\infty$-isomorphism. Thus, there exists the inverse 
$C_\infty$-isomorphism $(f_H)^{-1}$. 
Then we define $g$ by the commutative diagram
\[\xymatrix{A\ar[r]^{\sim\quad\ }&M\oplus C\ar@<0.5ex>[r]& \ar@<0.5ex>[l]
M=H(A,m_1)\\
A'\ar[r]^{\sim\quad\ }\ar@{.>}[u]^g&M'\oplus C'\ar@<0.5ex>[r]& \ar@<0.5ex>[l]M'=H(A',m_1').\ar[u]_{(f_H)^{-1}}}\]
(Note that, in order to construct this $g$ we need the decomposition theorem only, 
not the notion of $C_\infty$-homotopy. )

Now one can show $g\circ f\sim \id_A$ and $f\circ g\sim \id_{A'}$ since 
they correspond to 
$P\sim \id_{M\oplus C}$ on $M\oplus C$ and 
$P'\sim \id_{M'\oplus C'}$ on $M'\oplus C'$, respectively.  
\end{proof}

From Theorem \ref{thm:C-infty-homotopy}, a $C_\infty$-quasi-isomorphism has its homotopy inverse.

\section{Automorphism groups and derivation Lie algebras}
We consider the case of $K=\r$ in this section. For simplicity, the cofree coalgebra $BH$ generated by a $\z$-graded vector space $H$ is denoted by $BH$, which is a bialgebra as in Remark \ref{bar}.
\subsection{Lie groups and their Lie rings}
\label{liealg}
Let $(H,m)$ be a $C_\infty$-algebra. We consider graded Lie subalgebras contained in the graded Lie algebra
\[ \Der^+(BH):=\{D\in\Der(BH); D|_{\r\oplus H[1]}=0\}.\]
The degree $0$ part $\Der^{+}(BH)^0$ of $\Der^+(BH)$ is the Lie ring of the Lie group $\IAut(BH)$ of isomorphisms $\mathfrak{f}:BH\to BH$ of bialgebras such that the lowest term is $f_1=\id_H$. The exponential map $\exp:\Der^{+}(BH)^0\to \IAut(BH)$ is bijective. 

Since $m$ is a minimal $C_\infty$-algebra structure, its bar construction $\mathfrak{m}$ is in $\Der^{+} (BH)^1$. So the inner derivation $\ad(\mathfrak{m})=[\mathfrak{m},-]$ is a degree $1$ differential on $\Der^+(BH)$. 

The group of $C_\infty$-isomorphisms $f:(H,m)\to (H,m)$ such that $f_1=\id_H$ is denoted by $\IAut(H,m)$. Its Lie ring is the Lie algebra of bialgebra derivations which is a chain map $(BH,\mathfrak{m})\to (BH,\mathfrak{m})$, described by 
\[\Der^+(H,m)^0:=\Ker (\ad(\mathfrak{m}):\Der^{+}(BH)^0\to \Der^{+}(BH)^1).\]
The Lie ring of the Lie normal subgroup of $\IAut(H,m)$ 
\[\IAut_0(H,m):=\{f\in \IAut(H,m); \text{$f$ is $C_\infty$-homotopic to the identity}\}\]
is $[\m,\Der (BH)^{-1}]$. 
In fact, for any $f\in \IAut_0(H,m)$, there exist $\mathfrak{f}(t)\in \IAut(H,m)$ and a bialgebra derivation $\mathfrak{h}(t)$ over $\mathfrak{f}(t)$ such that
\[\frac{d\mathfrak{f}}{dt}(t)=[\mathfrak{m},\mathfrak{h}(t)],\] 
$\mathfrak{f}(1)=\mathfrak{f}$ and $\mathfrak{f}(0)=\id$. Then we have
\[\log\mathfrak{f}=\int_0^1\mathfrak{f}(t)^{-1}d\mathfrak{f}(t)=\int_0^1[\mathfrak{m},\mathfrak{f}(t)^{-1}\mathfrak{h}(t)]dt=\left[\m,\int_0^1\mathfrak{f}(t)^{-1}\mathfrak{h}(t)dt\right]\]
and which implies $\mathfrak{f}\in \exp([\m,\Der(BH)^{-1}])$.  
Thus, the Lie ring of the quotient Lie group 
\[\QAut(H,m):=\IAut(H,m)/\IAut_0(H,m)\]
is the Lie algebra $\QDer^+(H,m):=\Ima(\Der^+(H,m)^0\to H^0(\Der(BH),\ad(\mathfrak{m})))$.

\section{Construction of characteristic classes of fiber bundles}
\subsection{Moduli space of $C_\infty$-minimal models}
Let $X$ be an $n$-dimensional oriented closed manifold. It will be a typical fiber of a fiber bundle. We denote the reduced de Rham cohomology of $X$ by \[H:=\tilde{H}_{DR}^\bullet(X)=\bigoplus_{p=1}^\infty H_{DR}^p(X),\]
which is the cohomology of the reduced de Rham complex of $X$
\[A:=\tilde{A}^\bullet(X):=\Ker(A^\bullet(X)\to A^\bullet(*)).\]

For a minimal $C_\infty$-algebra structure $m$ on $H$, \textbf{the moduli space $Q(X,m)$ of $C_\infty$-quasi-isomorphisms over $m$} is the set of $C_\infty$-homotopy classes of $C_\infty$-quasi-isomorphisms $\tau:(H,m)\to A$ such that $\tau_1$ induces the identity map on the their cohomology $H$. 

The Lie group $\QAut(H,m)$ acts on $Q(X,m)$ by 
\[\tau\cdot f:=\tau\circ f\]
for $\tau\in Q(X,m)$, $f\in \QAut(H,m)$. This action is free and transitive since an $A_\infty$-quasi-isomorphism has a homotopy inverse. So $Q(X,m)$ has (the inverse limit of) smooth manifold structure which is isomorphic to $\QAut(H,m)$. 

The set $\mathcal{C}_\infty(X)$ of minimal $C_\infty$-structures $m$ on $H$ such that $Q(X,m)\neq\emptyset$ is parametrized by the space
\[\IAut(H,m)\backslash\IAut(BH).\]
So \textbf{the moduli space of $C_\infty$-minimal models} of the reduced de Rham complex $A$ of $X$
\[
Q(X):=\coprod_{m\in \mathcal{C}_\infty(X)}Q(X,m)
\]
is parametrized by the space
\[Q(X,m)\times_{\IAut(H,m)}\IAut(BH)\]
fixing $m$. It is the space of $C_\infty$-homotopy classes of $C_\infty$-minimal models $\tau:(H,m)\to A$ such that $\tau_1$ induces the identity map on the de Rham cohomology $H$. 

The mapping class group of $X$ 
\[\M(X):=\Diff_+(X)/\Diff_0(X)=\pi_0(\Diff_+(X))\]
acts on $Q(X)$ as follows:
\[[\varphi]\cdot[\tau,m]:=[\varphi\circ\tau\circ|\varphi|^{-1},|\varphi|\circ m\circ|\varphi|^{-1}]\]
for $[\tau,m]\in Q(X)$ and $[\varphi]\in\M(X)$. Here $|\varphi|$ is the induced map  on $H$ from $\varphi$. This action is well-defined since two isotopic diffeomorphisms $\varphi_0,\varphi_1$ of $X$ induce $C_\infty$-homotopic DGcA maps $A\to A$. 

\begin{rem}\label{MettoQ}
According to the Hodge-Kodaira's theorem, a Riemannian metric $\mu$ on $X$ defines a Hodge decomposition of $A$. Thus, using the proof of Theorem \ref{thm:decomp}, we have the canonical map
\[\Met(X):=\{\text{Riemannian metric $T^*X\otimes T^*X\to \r$ on $X$}\}\to Q(X).\]
For a diffeomorphism $\varphi$ of $X$, we can define the pushout
\[\varphi_*\mu:=\mu\circ (\varphi^*\otimes \varphi^*).\]
So the diffeomorphism group $\Diff_+(X)$ acts on $\Met(X)$. Furthermore, the Hodge decomposition obtained from $\varphi_*\mu$ is the image by $(\varphi^*)^{-1}$ of the Hodge decomposition obtained from $\mu$. So the $C_\infty$-minimal model obtained from $\varphi_*\mu$ is equal to $\varphi\cdot M$, where the $C_\infty$-minimal model $M$ is obtained from $\mu$. It means the map $\Met(X)\to Q(X)$ is $\Diff_+(X)$-equivariant (see also \cite{MT}).

\end{rem}

\subsection{Construction}\label{construction}

Let $E\to B$ be a smooth fiber bundle whose fiber is an oriented closed manifold $X$ with base point. It means $E\to B$ is equipped with a section. For simplicity, we set
\[Q:=Q(X),\ \mathcal{C}_\infty:=\mathcal{C}_\infty(X),\ Q(m):=Q(X,m),\ \mathcal{M}:=\mathcal{M}(X).\]
Choose a smooth fiberwise metric $g$ of $E\to B$. The metrics $g_b$ on fiber $E_b$ for $b\in B$ defines a Hodge decomposition on the de Rham complex $A^\bullet(E_b)$. We can restrict this Hodge decomposition to the one of $\tilde{A}^*(E_b)$. It gives a $C_\infty$-minimal model $\tau_b:(\tilde{H}^\bullet_{DR}(E_b),m_b)\to \tilde{A}^\bullet(E_b)$ of fibers as in \cite{KoSo} or Remark \ref{MettoQ}. Then we can obtain the map $B\to \S\backslash Q$ by $b\mapsto [\tau_b,m_b]$, where $\S$ is the image of the structure group of $E\to B$ in $\M$. Here $\S\backslash Q$ plays the role of the usual classifying space of bundles with structure group $\S$. Defining the de Rham complex of $\S\backslash Q$ by $A^\bullet(\S\backslash Q):=A^\bullet(Q)^\S$, we have the map $H_{DR}^\bullet(\S\backslash Q)\to H^\bullet_{DR}(B)$. Since any two metrics can be connected by a segment, this map is independent of the choice 
of a metric.

\subsubsection{Homologically trivial bundles}\label{hom-tri}
We consider the case where the structure group of a fiber bundle acts trivially on the de Rham cohomology group of the fiber. In other words, suppose $\S=\mathcal{I}:=\Ker(\M\to \GL(H))$. Then we have a map $q:B\to \mathcal{C}_\infty$ by giving a smooth fiberwise metric of $E\to B$. Fix $m\in \mathcal{C}_\infty$. Since the topological group $\IAut(H,m)$ is contractible, the pullback $q^*\IAut(BH)\to B$ of the principal $\IAut(H,m)$-bundle $\IAut(BH)\to \mathcal{C}_\infty$ is trivial. Taking a trivialization of the principal bundle, we get the $\I$-equivariant map
\[s:q^*Q=Q(m)\times_{\IAut(H,m)}q^*\IAut(BH)\simeq Q(m)\times \mathcal{C}_\infty\to Q(m).\]
Thus we can obtain the chain map
\[A^\bullet(Q(m))^\I\overset{s^*}{\to}A^\bullet(q^*Q)^\I\to A^\bullet(B).\]
From the action of $\QAut(H,m)$, the space $Q(m)$ has the Maurer-Cartan form $\eta\in A^1(Q(m);\QDer^+(H,m))$. Then we have the chain map
\[\Phi:C_{CE}^\bullet(\QDer^+(H,m))\to A^\bullet(Q(m))^{\I}.\]
Here $C_{CE}^\bullet(\QDer^+(H,m)):=\Lambda^\bullet\Hom(\QDer^+(H,m),\r)$ is the Chevalley-Eilenberg complex of the Lie algebra $\QDer^+(H,m)$ introduced in section \ref{liealg}. The differential $d_{CE}$ of the Chevalley-Eilenberg complex is defined by
\[(d_{CE}c)(D_1,\dots,D_{p+1}):=\sum_{i<j}(-1)^{i+j-1}c([D_i,D_j],D_1,\dots,\hat{D}_i,\dots,\hat{D}_j,\dots,D_{p+1})\]
for $p\geq0$ and $c\in C_{CE}^p(\QDer^+(H,m))$. The chain map $\Phi$ is constructed as follows: for a cochain $c\in C_{CE}^p(\QDer^+(H,m))$, we define
\[\Phi(c):=c(\eta^p)=\sum_{i_1<\dots<i_p}\eta_{i_1}\wedge \cdots\wedge \eta_{i_p}c(b_{i^1}\wedge \cdots\wedge b_{i^p}),\]
where we set
\[\eta=\sum_{i}\eta_ib^i\]
using a (topological) basis $\{b^i\}$ of $\QDer^+(H,m)$. The $p$-form $\Phi(c)$ is $\I$-invariant since $\I$ acts on $H$ trivially. Then $\Phi$ is a chain map by the flatness of $\eta$
\[d\eta+\frac12 [\eta,\eta]=\sum_{i}d\eta_ib_i+\sum_{i<j}\eta_i\wedge \eta_j[b^i, b^j]=0.\]
So we obtain the following:
\begin{thm}\label{htrivial}
Let $E\to B$ be a smooth fiber bundle with oriented closed fiber $X$ whose structure group acts trivially on the real cohomology group of $X$. Then the chain map $\Psi:C^\bullet_{CE}(\QDer^+(H,m))\to A^\bullet(B)$ obtained by the construction above induces the map $\Psi^\#$ between cohomologies which is independent of the choice of a smooth fiberwise metric. 
\end{thm}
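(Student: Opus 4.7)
The chain map property of $\Psi$ for a fixed choice of metric and trivialization is essentially already contained in the construction above, being a consequence of the Maurer--Cartan equation $d\eta + \frac{1}{2}[\eta,\eta] = 0$ for the form on $Q(m)$. The real content of the theorem is the independence of the induced cohomology map from the auxiliary choices, namely the fiberwise metric $g$ and the trivialization of $q^{*}\IAut(BH)\to B$.

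The plan is to package both choices into a single parametrized construction over $B \times [0,1]$ and then to extract a chain homotopy by fiber integration along the interval factor. Given two data $(g_0, \sigma_0)$ and $(g_1, \sigma_1)$, the space of smooth fiberwise metrics is convex, so $g_t := (1-t)g_0 + t g_1$ is itself a smooth fiberwise metric, now on the pulled-back bundle $E \times [0,1] \to B \times [0,1]$. Running the construction on this one-parameter family yields a map $\tilde q : B \times [0,1] \to \mathcal{A}_\infty$ that restricts to $q_0$ at $t=0$ and $q_1$ at $t=1$.

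Next the two boundary trivializations $\sigma_0, \sigma_1$ of $q_0^{*}\IAut(BH)$ and $q_1^{*}\IAut(BH)$ have to be extended to a single trivialization $\tilde \sigma$ of $\tilde q^{*}\IAut(BH) \to B \times [0,1]$. The bundle is trivial since $\IAut(H,m)$ is contractible and $B \times [0,1]$ deformation retracts onto $B \times \{0\}$; the ambiguity of trivializations is a map into $\IAut(H,m)$, and any map $B \times \{0,1\} \to \IAut(H,m)$ extends over the cylinder by contractibility of the target. Feeding $(\tilde g, \tilde \sigma)$ into the construction produces a chain map
\[
\tilde\Psi: C^{\bullet}_{CE}(\QCoder^{+}_{m}(BH)) \to A^{\bullet}(B \times [0,1])
\]
restricting to $\Psi_0, \Psi_1$ at the two ends. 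Fiber integration $K(c):=\int_{[0,1]} \tilde \Psi(c)$ then yields $\Psi_1 - \Psi_0 = dK + Kd$ by Stokes' theorem and the chain map property of $\tilde \Psi$, so $\Psi_0^{\#} = \Psi_1^{\#}$ at the level of cohomology.

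The principal technical point I expect is the simultaneous extension of the two boundary trivializations to the cylinder in a smooth way. Contractibility of $\IAut(H,m)$ handles this abstractly, but a concrete construction should proceed inductively along the pro-nilpotent filtration $\IAut(H,m) = \exp \Coder^{+,0}_{m}(BH)$ by tensor length of coderivations: one extends one graded piece at a time, each extension problem being essentially affine with contractible space of solutions. The smoothness of the resulting extension, and the compatibility between fiber integration and the $\I$-invariance that went into the definition of $\Psi$, are the places where a routine but genuine verification is needed.
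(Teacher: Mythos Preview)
Your proposal is correct and follows the same underlying idea as the paper: the paper's argument is essentially the single sentence ``Since any two metrics can be connected by a segment, this map is independent of the choice of a metric,'' and your cylinder-plus-fiber-integration construction is the standard way to make that sentence precise at the chain level. You go further than the paper by also treating the dependence on the trivialization of $q^{*}\IAut(BH)$, which the paper silently absorbs into the contractibility of $\IAut(H,m)$; your inductive extension along the pro-nilpotent filtration is the right way to handle this, and the remaining verifications you flag are indeed routine.
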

For each cohomology class $\alpha$ in $H^\bullet_{CE}(\QDer^+(H,m))$, 
we call the image $c_\alpha(E):=\Psi^\#(\alpha)$ by the $C_\infty$-characteristic 
class of $E$ with label $\alpha$.

\subsubsection{Formal manifold bundles}
We consider the case where $X$ is a formal manifold, i.e. $\mathcal{C}_\infty=\mathcal{C}_\infty(X)$ contains the algebra structure $m$ of $H$, and there exists a decomposition of $S$-modules 
\[\Der^{+}(BH)^0=V\oplus \Der^{+}(H,m)^0,\]
where $S$ is the image of $\S$ in $\GL(H)$ and $V$ is an $S$-submodule of $\Der^{+}(BH)^0$.

By the same discussion of Lemma 3.5 in \cite{MT}, we can obtain the following:
\begin{lem}
The $S$-equivariant principal $\IAut(H,m)$-bundle $\IAut(BH)\to \mathcal{C}_\infty$ is $S$-trivial equivariantly. 
\end{lem}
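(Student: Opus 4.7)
The plan is to construct the $S$-equivariant trivialization explicitly via the exponential map and the decomposition hypothesis. Recall that $\Coder^{+,0}(BH)$ carries a natural decreasing filtration by tensor length (coderivations whose components $D_n$ vanish for $n \le k$). This filtration is $S$-invariant and compatible with the Lie bracket, exhibiting $\Coder^{+,0}(BH)$ as a pro-nilpotent Lie algebra and making the exponential $\exp : \Coder^{+,0}(BH) \to \IAut(BH)$ a bijection of $S$-spaces (as already used in Section \ref{liealg}), which restricts to $\exp : \Coder^{+,0}_m(BH) \to \IAut(H,m)$. By hypothesis, $V \subset \Coder^{+,0}(BH)$ is an $S$-submodule complementary to $\Coder^{+,0}_m(BH)$, and both pieces are compatible with the filtration.

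I would consider the $S$-equivariant map
\[
 \Psi : V \times \IAut(H,m) \longrightarrow \IAut(BH), \qquad (v,f) \longmapsto \exp(v)\cdot f,
\]
which intertwines the (right) $\IAut(H,m)$-action on the target with the product action on the second factor of the source. Once $\Psi$ is shown to be a bijection, the map $\sigma(v):=\exp(v)$ is an $S$-equivariant section of $\IAut(BH) \to \mathcal{A}_\infty$, identifying the principal bundle $S$-equivariantly with $V \times \IAut(H,m) \to V$, which is the required trivialization.

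To prove that $\Psi$ is a bijection, I would argue level by level in the pro-nilpotent filtration. Given $g \in \IAut(BH)$, decompose $\log g = y_V + y_m$ via the direct sum; at the lowest filtration level this forces $v \equiv y_V$ and $f \equiv \exp(y_m)$. Inductively, suppose $v$ and $f$ have been determined modulo filtration degree $k$; applying the Baker--Campbell--Hausdorff formula to $\log\bigl(\exp(-v)\, g\, f^{-1}\bigr)$, the degree-$k$ obstruction lies in $\Coder^{+,0}(BH)$ and can be decomposed into its $V$-part and $\Coder^{+,0}_m(BH)$-part; the former is absorbed by a correction to $v$ at filtration degree $k$, and the latter is absorbed by a correction to $f$ in $\IAut(H,m)$. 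Uniqueness at each step follows from the directness of the sum, and the whole process converges in the inverse limit. The same argument gives injectivity.

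The main obstacle is the bookkeeping in this inductive BCH procedure, ensuring that at each filtration degree the correction terms stay in the prescribed $S$-submodules; this is precisely the adaptation of \cite{MT}, Lemma 3.5 to the coderivation Lie algebra here. The $S$-equivariance of $\Psi$, and hence of the resulting trivialization, is automatic throughout because the filtration, the decomposition $V \oplus \Coder^{+,0}_m(BH)$, the Lie bracket, and the exponential are all $S$-equivariant.
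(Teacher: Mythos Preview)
Your approach is exactly what the paper intends: the paper gives no argument beyond citing Lemma~3.5 of \cite{MT}, and your BCH induction on the tensor-length filtration is precisely that lemma transported to the coderivation setting.

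One step deserves justification. You assert that ``both pieces are compatible with the filtration,'' but the hypothesis only provides \emph{some} $S$-complement $V$, not a filtered one; without this your inductive correction ``at filtration degree $k$'' need not stay in $V$. The fix is available because we are in the formal case $m=m_2$: then $\ad(\mathfrak m)$ is homogeneous of degree $+1$ in tensor length, so $\Coder^{+,0}_m(BH)=\prod_{n\ge 2}K_n$ is itself graded. Composing the given $S$-equivariant splitting with the inclusion of the $n$-th factor and the projection to the $n$-th factor produces an $S$-equivariant section of $G_n\to G_n/K_n$ for each $n$, hence graded $S$-complements $V_n\subset G_n$; replacing $V$ by $\prod_n V_n$ makes your filtration-level induction go through verbatim. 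A cosmetic point: the paper parametrizes $\mathcal A_\infty$ as $\IAut(H,m)\backslash\IAut(BH)$, so the principal action is on the left and your $\Psi$ should read $(v,f)\mapsto f\cdot\exp(v)$; the argument is otherwise unchanged.
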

Then there exists an $\S$-equivariant diffeomorphism
\[Q=Q(m)\times_{\IAut(H,m)}\IAut(BH)\simeq Q(m)\times \mathcal{C}_\infty.\] 
Since the space $\mathcal{C}_\infty$ is also contractible, the space $Q$ is homotopic to $Q(m)$ $\S$-equivariantly. Then, from the Maurer-Cartan form on $Q(m)$, we have the chain map
\[C_{CE}^\bullet(\QDer^+(H,m),S)\to A^\bullet(Q(m))^\S\]
in the same way as subsection \ref{hom-tri}. Here \[C_{CE}^\bullet(\QDer^+(H,m),S):=C_{CE}^\bullet(\QDer^+(H,m))^S\] is the $S$-invariant Chevalley-Eilenberg complex of $\QDer^+(H,m)$.

\begin{thm}\label{formal}
Let $E\to B$ be a smooth fiber bundle with oriented closed formal fiber $X$. Suppose there exists a decomposition of $S$-modules 
\[\Der^{+}(BH)^0=V\oplus \Der^{+}(H,m)^0,\]
where $m$ is the algebra structure of $H$ and $S$ is the image of the structure group in $\GL(H)$. Then the chain map $C_{CE}^\bullet(\QDer^+(H,.m),S)\to A^\bullet(B)$ obtained by the construction above induces the map between cohomologies which is independent of the choice of a smooth fiberwise metric. 

\end{thm}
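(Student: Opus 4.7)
The plan is to run the proof of Theorem \ref{htrivial} equivariantly with respect to $S$ in place of restricting to the identity component $\I$, with the formality hypothesis and the $S$-module decomposition playing the role that homological triviality played there.

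First, fixing the strictly associative algebra structure $m \in \mathcal{A}_\infty$ on $H$ supplied by formality, I would invoke the preceding lemma to obtain an $S$-equivariant trivialization of the principal $\IAut(H,m)$-bundle $\IAut(BH) \to \mathcal{A}_\infty$. The $S$-module decomposition $\Coder^{+,0}(BH) = V \oplus \Coder^{+,0}_m(BH)$ is precisely what allows one to exponentiate an $S$-invariant horizontal complement and produce such a global $S$-equivariant section, as in Lemma 3.5 of \cite{MT}. Combined with the free transitive action of $\QAut(H,m)$ on $Q(m)$, this yields the $\S$-equivariant diffeomorphism $Q \cong Q(m) \times \mathcal{A}_\infty$, and the contractibility of $\mathcal{A}_\infty$ then gives an $\S$-equivariant homotopy equivalence $\sigma : Q \to Q(m)$.

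Second, I would form the Maurer-Cartan form $\eta \in A^1(Q(m); \QCoder^+_m(BH))$ coming from the free transitive $\QAut(H,m)$-action and define
\[\Phi(c) := c(\eta^p), \qquad c \in C_{CE}^p(\QCoder^+_m(BH)),\]
exactly as in Subsection \ref{hom-tri}. Flatness of $\eta$ gives $\Phi \circ d_{CE} = d \circ \Phi$. For an $S$-invariant cochain $c$, equivariance of $\eta$ under the induced conjugation action of $S$ on $\QCoder^+_m(BH)$ forces $\Phi(c) \in A^\bullet(Q(m))^\S$, so $\Phi$ restricts to a chain map $C_{CE}^\bullet(\QCoder^+_m(BH), S) \to A^\bullet(Q(m))^\S$. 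A smooth fiberwise metric $g$ on $E \to B$, together with the Kontsevich-Soibelman construction, produces a fiberwise family of minimal models; restricting to reduced cohomology and using $\sigma$ gives an $\S$-equivariant classifying map $B \to \S \backslash Q$, whose pullback composed with $\sigma^*$ yields the desired chain map $\Psi : C_{CE}^\bullet(\QCoder^+_m(BH), S) \to A^\bullet(B)$.

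For independence of the metric, I would use that the space of smooth fiberwise Riemannian metrics on $E \to B$ is convex, hence contractible. Any two metrics $g_0, g_1$ are joined by the segment $g_t = (1-t) g_0 + t g_1$, which induces a homotopy between the resulting classifying maps $B \to \S \backslash Q$, and the standard homotopy invariance of de Rham cohomology forces $\Psi_{g_0}^\#$ and $\Psi_{g_1}^\#$ to coincide. The main obstacle I expect is the careful bookkeeping of $S$-equivariance at every stage: one must verify that the Kontsevich-Soibelman fiberwise minimal model construction depends $\S$-equivariantly on the metric, and that the trivialization produced from the $S$-module decomposition is compatible with this dependence, so that the composite really descends to the $S$-invariant Chevalley-Eilenberg subcomplex. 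Once these equivariance checks are in hand, the remaining verification parallels the $\I$-equivariant situation of Theorem \ref{htrivial} almost verbatim.
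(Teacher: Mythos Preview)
Your proposal is correct and follows essentially the same approach as the paper: invoke the preceding lemma (via the $S$-module splitting, as in Lemma~3.5 of \cite{MT}) to trivialize $\IAut(BH)\to\mathcal{A}_\infty$ $S$-equivariantly, deduce the $\S$-equivariant identification $Q\simeq Q(m)\times\mathcal{A}_\infty$ and hence the $\S$-equivariant homotopy equivalence with $Q(m)$, then run the Maurer--Cartan construction of Subsection~\ref{hom-tri} on $S$-invariant cochains and use the segment between metrics for independence. The paper's own argument is in fact terser than yours, simply stating that the construction proceeds ``in the same way as subsection~\ref{hom-tri}'' once the equivariant trivialization is in hand; your added remarks on the $S$-equivariance bookkeeping are appropriate elaborations rather than a different route.
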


\subsection{Relation to the construction using the fundamental group}\label{surface}
For any $[\tau,m]\in Q$, we have the dual of the bar construction of $\tau$ 
\[(BA)^*\to (BH)^*=\hat{T}W,\]
where $\hat{T}W$ means the completed tensor product generated by the desuspended reduced real homology group $W:=H^*[-1]$. So composing the chain map $C_\bullet(\Omega X)\to (BA)^*$ obtained by iterated integrals from the cube chain complex of the loop space $\Omega X$, we obtain the chain map
\[C_\bullet(\Omega X;\r)\to (\hat{T}W,\delta),\]
where $\delta:=\mathfrak{m}^*$. The degree $0$ part of (the completion of) map induced to homologies gives
\[\hat{\r}\pi_1=\hat{H}_0(\Omega X;\r)\to H_0(\hat{T}W,\delta)=\hat{T}H_1/I_\delta,\]
where $\pi_1:=\pi_1(X)$, $H_1:=H_1(X;\r)[-1]$ and $I_\delta:=\delta(H_2(X;\r)[-1])$. 
Remark that the map associated with an element in $Q$ which comes from a metric $g$ on $X$ is the Chen expansion determined with $g$ following \cite{GLS}. Then we have the $\M$-equivariant map $\theta:Q\to \Theta(\pi_1)$. Here the definition of the space $\Theta(\pi_1)$ is in \cite{MT}.

Fixing $m$, we have the commutative diagram
\[\xymatrix{T_\tau Q(m)\ar[d]^{\theta_*}\ar[r]&T_1\QAut(H,m)\ar[d]^{\theta_*}\ar@{=}[r]&\QDer^+(H,m)\\
T_{\theta(\tau)}\Theta(\pi_1,I_\delta)\ar[r]&T_1\IAut (\hat{L}H_1/I_\delta)\ar@{=}[r]&\Der^+( \hat{L}H_1/I_\delta).}\]
Here $\hat{L}H_1$ is the completed free Lie algebra generated by $H_1$. So we obtain 
\[\theta_*\eta_1=\theta^*\eta_2,\]
where $\eta_1$ is the Maurer-Cartan form on $Q(m)$ by the action of $\IAut(H,m)$ and $\eta_2$ is the one on $\Theta(\pi_1)$ by the action of $\IAut (\hat{L}H_1/I_\delta)$. 

Thus we obtain the following:
\begin{thm}We have the commutative diagram
\[\xymatrix{H^\bullet_{CE}(\QDer^+(H,m))\ar[r]& H^\bullet_{DR}(B)\\H^\bullet_{CE}(\Der^+( \hat{L}H_1/I_\delta))\ar[u]\ar[ur]&}\]
under the assumption in Theorem \ref{htrivial} and 
\[\xymatrix{H^\bullet_{CE}(\QDer^+(H,m),S)\ar[r]& H^\bullet_{DR}(B)\\H^\bullet_{CE}(\Der^+( \hat{L}H_1/I_\delta),S)\ar[u]\ar[ur]&}\]
under the assumption in Theorem \ref{formal}. 
\end{thm}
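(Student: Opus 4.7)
The plan is to reduce the commutativity of both diagrams to the identity $\theta_*\eta_1=\theta^*\eta_2$ of Maurer--Cartan forms established immediately before the statement, together with the $\M$-equivariance of $\theta:Q\to \bar{\Theta}(\pi_1)$ and the remark (citing \cite{GLS}) that the element of $\bar{\Theta}(\pi_1,I_\delta)$ determined by a fiberwise metric $g$ is the Chen expansion of the same $g$. The first identity intertwines the two constructions pointwise on $Q(m)$; the remaining two intertwine the classifying maps from $B$.

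First I would construct the left vertical arrow. The Lie algebra homomorphism $\theta_*:\QCoder^+_m(BH)\to \ODer^+(\hat{T}H_1/I_\delta)$ is obtained as the derivative at the identity of the Lie group homomorphism induced by $\theta$ on the automorphism groups which act freely and transitively on $Q(m)$ and $\bar{\Theta}(\pi_1,I_\delta)$ respectively. Dualising and extending multiplicatively yields a cochain map $\theta_*^{\ast}:C_{CE}^\bullet(\ODer^+(\hat{T}H_1/I_\delta))\to C_{CE}^\bullet(\QCoder^+_m(BH))$, which descends to the vertical arrow on cohomology. Under the assumption of Theorem \ref{formal}, the map $\theta_*$ is also $S$-equivariant, so $\theta_*^{\ast}$ restricts to the $S$-invariant subcomplexes.

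Next I would verify that the two routes agree at the level of forms on $Q(m)$. Writing $\Phi_1(c)=c(\eta_1^p)$ and $\Phi_2(c)=c(\eta_2^p)$ for the Chevalley--Eilenberg-to-de Rham chain maps on $Q(m)$ and on $\bar{\Theta}(\pi_1,I_\delta)$, the key identity gives, for any cocycle $c\in C_{CE}^p(\ODer^+(\hat{T}H_1/I_\delta))$,
\[
\theta^*\Phi_2(c)=c\bigl((\theta^*\eta_2)^p\bigr)=c\bigl((\theta_*\eta_1)^p\bigr)=(\theta_*^{\ast}c)(\eta_1^p)=\Phi_1(\theta_*^{\ast}c).
\]
Composing with the classifying map $B\to Q(m)$ (a lift of $B\to\S\backslash Q$ obtained from a trivialisation of $q^*\IAut(BH)\to B$ in the homologically trivial case, and from the $S$-equivariant trivialisation of Section~4.2 in the formal case), both routes produce the same form on $B$: the $A_\infty$-route applied to $\theta_*^{\ast}c$ and the $\pi_1$-route applied to $c$. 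Here we use that the $\theta$-image of the $A_\infty$-classifying map $B\to Q(m)$ coincides with the Chen-expansion classifying map $B\to \bar{\Theta}(\pi_1,I_\delta)$ of \cite{MT}. Independence from the metric follows, as before, by the segment-connectivity of the space of fiberwise metrics.

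The main obstacle is exactly this geometric compatibility of the two classifying maps associated to $g$, namely $\theta\circ(\text{$A_\infty$-classifier})=(\text{Chen classifier})$ on $B$. This reduces fiberwise to the identification of the Kontsevich--Soibelman minimal model of $A^\bullet(E_b)$ with the Chen iterated-integral presentation of $\hat{\r}\pi_1(E_b)$ coming from the same metric $g_b$, which is the content of the remark preceding the statement (citing \cite{GLS}). Once this is granted, the $S$-invariant refinement for the formal case is immediate from the $S$-equivariance of $\theta$ and of the trivialisations used in Section~4.2, and the rest is the formal Maurer--Cartan computation displayed above.
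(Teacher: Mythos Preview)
Your proposal is correct and matches the paper's approach exactly: the paper gives no separate proof but simply writes ``Thus we obtain the following'' after establishing the $\M$-equivariance of $\theta$ and the identity $\theta_*\eta_1=\theta^*\eta_2$, and your argument is precisely the unpacking of that sentence via the displayed computation $\theta^*\Phi_2(c)=\Phi_1(\theta_*^{\ast}c)$. One small wording point: the paper does not literally use a lift $B\to Q(m)$ but rather the chain map $A^\bullet(Q(m))^{\I}\to A^\bullet(q^*Q)^{\I}\to A^\bullet(B)$ through invariant forms; since $\Phi_1$ and $\Phi_2$ land in invariant forms this is equivalent to what you wrote.
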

It would be interesting to compare our construction with another approach to diffeomorphism groups from noncommutative geometry in \cite{L}.

\section{Example for a simply-connected fiber}

In this section, we shall give a non-trivial example of our characteristic classes and its calculation. 

We consider the case of the product $X=S^2\times S^3$ of the 2-sphere and the 3-sphere. Fix base points $*_2$ and $*_3$ of $S^2$ and $S^3$, respectively. Then $*_X=(*_2,*_3)$ is a base point of $X$. Its real homotopy group $\pi_\bullet(X)\otimes \r$ is 3-dimensional and generated by three elements $\alpha,\beta,[\alpha,\alpha]$ as vector space. Here $\alpha\in \pi_2(X)$ is represented by the embedding $S^2\times *_3\subset X$, $\beta \in \pi_3(X)$ is represented by the embedding $*_2\times S^3\subset X$, and $[\alpha,\alpha]\in \pi_3(X)$ is represented by the composition $S^3\to S^2\subset X$ of the Hopf fibration $S^3\to S^2$ and the embedding $S^2\times *_3\subset X$. Note that $[\alpha,\alpha]\in \pi_3(X)$ is the Whitehead product of two $\alpha$'s.  Thus the real homotopy Lie algebra $\pi^\r_\bullet(X):=\pi_\bullet(X)[-1]\otimes \r$ is generated by two elements $\alpha,\beta$ as Lie algebra.

Let us define the diffeomorphism $\varphi$ of $X$ fixing $*_X$ as follows. Choose a orientation-preserving smooth map $g:(S^3,*_3)\to (SO(3),1)$ representing a generator of $\pi_3(SO(3))\simeq \z$. Then put $\varphi(x,y)=(g(y)x,y)$ for $x\in S^2$, $y\in S^3$. The map induces the identity on the homology of $X$. On the other hand, $\varphi_*:\pi_\bullet(X)\otimes\r\to \pi_\bullet(X)\otimes \r$ is described by
\[\varphi_*\alpha=\alpha,\quad \varphi_*\beta=\beta+[\alpha,\alpha]\]
since the map $S^3\to SO(3)\to S^2$ defined by $y\mapsto g(y)*_2$ is the Hopf fibration.

Then we get the mapping torus \[T(\varphi):=X\times [0,1]/(x,0)\sim (\varphi(x),1),\] and the smooth fiber bundle $X\to T(\varphi)\to S^1=[0,1]/\{0,1\}$. We shall apply our construction to this fiber bundle $T(\varphi)\to S^1$ with fixed section $t\mapsto [*_X,t]$.

First, we shall calculate $\QDer^+(H,m)$, where $m$ is the product of the cohomology ring of $X$. The Quillen model $(LW,\delta)$ of $X$ is described as follows:
\[W=\tilde{H}_\bullet(X;\r)[-1]=\langle x_1,x_2,x_4\rangle,\quad \delta=[x_1,x_2]\frac{\pt}{\pt x_4},\]
where the homological degree of $x_i$ is equal to $i$. (Its homology is non-canonically isomorphic to $\pi^\r_\bullet(X)$.) So the space of derivations on $LW$ with homological degree 0 is spanned by
\[x_1\frac{\pt}{\pt x_1},x_2\frac{\pt}{\pt x_2}, x_4\frac{\pt}{\pt x_4},[x_1,x_1]\frac{\pt}{\pt x_2},[x_2,x_2]\frac{\pt}{\pt x_4},[x_1,[x_1,x_2]]\frac{\pt}{\pt x_4}\]
and the space of derivations on $LW$ with homological degree 1 is spanned by
\[x_2\frac{\pt}{\pt x_1},[x_1,x_1]\frac{\pt}{\pt x_1},[x_1,x_2]\frac{\pt}{\pt x_2},[x_1,x_4]\frac{\pt}{\pt x_4},[x_2,[x_1,x_2]]\frac{\pt}{\pt x_4},[x_1,[x_1,[x_1,x_2]]]\frac{\pt}{\pt x_4}.\]
So, calculating the images of these elements by $\ad(\delta)$, we get
\begin{align*}
\QDer^+(H,m)&\simeq \Ima(\Der^+(LW)_0\cap \Ker(\ad(\delta))\to H_0(\Der(LW),\ad(\delta)))\\&\simeq \left\langle [x_1,x_1]\frac{\pt}{\pt x_2}\right\rangle.\end{align*}
Here note that 
\begin{align*}\Der^+(LW)_0:=&\{D\in \Der(LW)_0;D(W)\subset [LW,LW]\}\\
=&\left\langle[x_1,x_1]\frac{\pt}{\pt x_2},[x_2,x_2]\frac{\pt}{\pt x_4},[x_1,[x_1,x_2]]\frac{\pt}{\pt x_4}\right\rangle,\end{align*}
\[\Der^+(LW)_0\cap \Ker(\ad(\delta))=\Der^+(LW)_0,\ (\Ima \delta)_0= \left\langle [x_2,x_2]\frac{\pt}{\pt x_2},[x_2,[x_1,x_2]]\frac{\pt}{\pt x_4}\right\rangle.\]
In particular, $\QDer^+(H,m)$ is 1-dimensional, and $Q(m)$ is diffeomorphic to $\r$ since $\QAut(H,m)$ acts on $Q(m)$ freely and transitively. By means of the discussion in the beginning of Section \ref{surface}, the $C_\infty$-quasi-isomorphism $f\in Q(m)$ induces the isomorphism between the primitive part of bialgebras
\[\psi^{(r)}:\pi_\bullet^\r(X)=\Prim H^\bullet(BA)^*\simeq \Prim H^\bullet(BH,\m)^*=H_\bullet(LW,\delta).\]
Note that the canonical isomorphism $\pi_\bullet^\r(X)=\Prim H^\bullet(BA)^*$ follows from Chen's theorem \cite{Chen} because $X$ is simply-connected. Since $f_1$ induces the identity of $H$, such an isomorphism must be given by
\[\psi^{(r)}(\alpha)=x_1,\quad \psi^{(r)}(\beta)=x_2+r[x_1,x_1]\]
for some $r\in\r$. Specifically, the constant $r$ is determined by the iterated integral
\[r=\int_{S^3} f_2(X,X)+\frac12 \int_{S^3} f_1(X) f_1(X),\]
where $X\in H^2$ is the dual of $x_1$. It means that $Q(m)$ is parametrized by $\{\psi^{(r)}\}_{r\in\r}$. 

Next, we consider the structure group $G:=\langle \varphi\rangle=\{\varphi^n;n\in\z\}$ of $T(\varphi)\to S^1$. The left action of $G$ on $Q(m)$ is described by
\[\varphi\cdot \psi^{(r)}=\psi^{(r)}\circ \varphi^{-1}=\psi^{(r-1)}.\]
Thus the diffeomorphism $Q(m)\simeq \r$ defined by $\psi^{(r)}\mapsto r$ is equivariant with respect to the action of the group $G\simeq \z$. Furthermore, the Lie group
\[\QAut(H,m)\simeq \{\psi\in \Aut(H_\bullet(LW));\psi(x_1)=x_1,\ \psi(x_2)=x_2+s[x_1,x_1]\ (s\in \r)\},\]
which is isomorphic to the Lie group $\r$, acts on $Q(m)$ by the sum of real numbers. The map defined in Section \ref{hom-tri} gives the isomorphism
\[H_{CE}^\bullet(\QDer(H,m))\simeq H^\bullet_{DR}(G\backslash Q(m)).\]

At last, we shall apply our construction of characteristic classes of $T(\varphi)\to S^1$. Let us denote by $\mu$ the product metric on $X=S^2\times S^3$ of the standard metrics. Using a partition of unity, take a metric $\tilde{\mu}$ on the trivial bundle $X\times [0,1]\to [0,1]$ such that $\tilde{\mu}$ coincides with constant metrics $\mu$ and $\varphi_*\mu$ on $[0,\epsilon]$ and $[1-\epsilon,1]$ for small $\epsilon>0$, respectively. Then $\tilde{\mu}$ induces the metric $\hat{\mu}$ on the fiber bundle $T(\varphi)\to S^1$. Clearly, the metric $\mu$ defines the model $\psi^{(0)}$, while $\varphi_*\mu$ defines the model $\psi^{(0)}\circ \varphi=\psi^{(1)}$. Therefore the map $\bar{\gamma}:S^1\to G\backslash Q(m)\simeq \r/\z$ defined from the metric $\hat{\mu}$ (as in Section \ref{construction}) has the lift $\gamma:[0,1]\to \r$ such that $\gamma(0)=0,\ \gamma(1)=1$. Since the map $\bar{\gamma}$ induces the isomorphism $\bar{\gamma}^*:H^1_{DR}(\r/\z)\simeq H^1_{DR}(S^1)$ on cohomologies, our characteristic map 
\[\Phi:H^1_{CE}(\QDer^+(H,m))\simeq H^1_{DR}(G\backslash Q(m))\simeq H^1_{DR}(S^1)\]
is an isomorphism. The vector space $H^1_{CE}(\QDer^+(H,m))$ is generated by the linear dual of the class of $[x_1,x_1]\pt/\pt x_2$.

In the same way, for a simply-connected oriented manifold $X$ and a diffeomorphism $\varphi$ of $X$ preserving its orientation and fixed point, we can obtain a non-trivial characteristic class of the mapping torus $T(\varphi)\to S^1$ if $\varphi$ acts trivially 
on the rational homology of $X$ but does non-trivially on the rational homotopy group of $X$.

\end{document}